\documentclass[10pt,a4paper]{amsart}

\theoremstyle{plain}
\newtheorem{theorem}{Theorem}[section]
\newtheorem{lemma}[theorem]{Lemma}
\newtheorem{proposition}[theorem]{Proposition}

\theoremstyle{definition}

\theoremstyle{remark}
\newtheorem{remark}[theorem]{Remark}

\def\bin #1#2 {\left( \matrix { #1 \cr #2 \cr } \right) }

\begin{document}

\title[A lower bound for $K^2_S$]
{A lower bound for $K^2_S$}

%    Information for first author
\author{Vincenzo Di Gennaro }
%    Address of record for the research reported here
\address{Universit\`a di Roma \lq\lq Tor Vergata\rq\rq, Dipartimento di Matematica,
Via della Ricerca Scientifica, 00133 Roma, Italy.}
\email{digennar@axp.mat.uniroma2.it}
%    \thanks will become a 1st page footnote.
%\thanks{}

%    Information for second author
\author{Davide Franco }
\address{Universit\`a di Napoli
\lq\lq Federico II\rq\rq, Dipartimento di Matematica e
Applicazioni \lq\lq R. Caccioppoli\rq\rq, P.le Tecchio 80, 80125
Napoli, Italy.} \email{davide.franco@unina.it}

\abstract Let $(S,\mathcal L)$ be a smooth, irreducible,
projective, complex surface, polarized by a very ample line bundle
$\mathcal L$ of degree $d > 35$. In this paper we  prove that
$K^2_S\geq -d(d-6)$. The bound is sharp, and $K^2_S=-d(d-6)$ if
and only if $d$ is even, the linear system $|H^0(S,\mathcal L)|$
embeds $S$  in a smooth rational normal scroll $T\subset \mathbb
P^5$ of dimension $3$, and here, as a divisor, $S$ is linearly
equivalent to $\frac{d}{2}Q$, where $Q$ is a quadric on $T$.

\bigskip\noindent {\it{Keywords}}: Projective surface, Castelnuovo-Halphen's
Theory, Rational normal scroll.

\medskip\noindent {\it{MSC2010}}\,: Primary 14J99; Secondary 14M20, 14N15,
51N35.

\endabstract
\maketitle

\begin{center}
{\it{Dedicated to Philippe Ellia on his sixtieth birthday.}}
\end{center}

\bigskip
\section{Introduction}

The study of numerical invariants of projective varieties, and of
the relations between them, is a classical subject in Algebraic
Geometry. We refer to \cite{Zak1} and \cite{Zak2} for an overview
on this argument. In this paper we turn our attention to the
self-intersection  of the canonical bundle of a smooth projective
surface $S$. One already knows an upper bound in terms of the
degree of $S$ and of the dimension of the space where $S$ is
embedded \cite{D2}. Now we are going to prove the following {\it
lower} bound:

\begin{theorem}\label{lbound}  Let $(S,\mathcal L)$ be a smooth,
irreducible, projective, complex surface, polarized by a very
ample line bundle $\mathcal L$ of degree $d > 35$. Then:
$$
K^2_S\geq -d(d-6).
$$
The bound is sharp, and the following properties are equivalent.

\medskip
(i) $K^2_S= -d(d-6)$;

\medskip
(ii) $h^0(S,\mathcal L)=6$, and the linear system $|H^0(S,\mathcal
L)|$ embeds $S$ in $\mathbb P^5$ as a scroll with sectional genus
$g=\frac{d^2}{8}-\frac{3d}{4}+1$;

\medskip
(iii) $h^0(S,\mathcal L)=6$, $d$ is even, and the linear system
$|H^0(S,\mathcal L)|$ embeds $S$  in a smooth rational normal
scroll $T\subset \mathbb P^5$ of dimension $3$, and here $S$ is
linearly equivalent to $\frac{d}{2}(H_T-W)$, where $H_T$ is the
hyperplane class of $T$, and $W$ the ruling (i.e. $S$ is linearly
equivalent to an integer multiple of a smooth quadric $Q\subset
T$).
\end{theorem}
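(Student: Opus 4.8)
\medskip

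The plan is to combine Castelnuovo--Halphen theory for the general hyperplane section with the adjunction-theoretic dichotomy of Sommese--Van de Ven. Set $r=h^0(S,\mathcal L)-1$, so that the complete linear system embeds $S$ as a nondegenerate surface of degree $d$ in $\mathbb P^r$, and let $g$ be its sectional genus; by adjunction $K_S\cdot\mathcal L=2g-2-d$. First I would dispose of the low codimension cases. For $r=3$ one has $K_S=(d-4)\mathcal L$, hence $K^2_S=d(d-4)^2>-d(d-6)$. For $r=4$ the classification of smooth surfaces in $\mathbb P^4$ (ruled surfaces there have bounded degree, and the double point formula controls the remaining invariants) shows that for $d>35$ one has $K^2_S>-d(d-6)$, with no case of equality. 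Thus I may assume $r\ge5$.

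For $r\ge5$ the hypothesis $d>35$ excludes the sporadic pairs in Sommese--Van de Ven adjunction theory, leaving two cases: either $(S,\mathcal L)$ is a scroll over a smooth curve, or $K_S+\mathcal L$ is nef. In the \emph{nef} case $(K_S+\mathcal L)^2\ge0$ gives
$$
K^2_S\ge d+4-4g,
$$
and since $C:=S\cap H\subset\mathbb P^{r-1}$ is a nondegenerate smooth curve of degree $d$, Castelnuovo's inequality yields $g\le\pi(d,r-1)\le\pi(d,4)$; a direct estimate gives $d+4-4\pi(d,4)>-d(d-6)$ for $d>35$. Hence the inequality is \emph{strict} in the nef case, and no extremal surface arises here.

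The remaining \emph{scroll} case is the sharp one and carries the real difficulty. Here $S=\mathbb P(\mathcal E)\to B$ is geometrically ruled with $g(B)=g$, so $K^2_S=8(1-g)$, and the desired bound becomes the sectional genus estimate
$$
g\le\frac{(d-2)(d-4)}{8}.
$$
I would prove this by applying Castelnuovo--Halphen theory to $C\cong B$: since $g$ is of order $d^2$, the curve is subextremal and must lie on a surface of minimal degree, and, lifting the ruling, $S$ itself must lie on a three--dimensional rational normal scroll $T\subset\mathbb P^r$ of minimal degree $r-2$. The dominant case is $r=5$, where the unique smooth such $T$ is $\mathbb P^2\times\mathbb P^1$ in its Segre embedding; writing $h_1,h_2$ for the pullbacks of the hyperplane classes, a scroll of large genus inside $T$ must be ruled by the lines $\{\mathrm{pt}\}\times\mathbb P^1$, i.e.\ $S=D\times\mathbb P^1$ with $D\subset\mathbb P^2$ a plane curve of degree $d/2$. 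This forces $d$ even and $g=g(D)\le\binom{d/2-1}{2}=\tfrac{(d-2)(d-4)}{8}$, with equality exactly when $D$ is smooth. Reading off $K^2_S=8-8g=-d(d-6)$ and $g=\tfrac{d^2}{8}-\tfrac{3d}{4}+1$ then gives the bound together with the implications $(iii)\Rightarrow(ii)\Rightarrow(i)$, while $(i)\Rightarrow(iii)$ is precisely the rigidity just described, $S\sim\tfrac d2(H_T-W)$ being the class of $D\times\mathbb P^1$.

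The main obstacle is the sharp genus estimate for scrolls and its rigidity. The crude Castelnuovo bound $\pi(d,4)\sim d^2/6$ exceeds the required $\tfrac{(d-2)(d-4)}{8}\sim d^2/8$, so one genuinely needs the Halphen refinement forcing subextremal curves onto surfaces of minimal degree, followed by a lifting argument placing $S$ on a three--dimensional rational normal scroll and pinning its divisor class. Handling all minimal--degree scrolls $T\subset\mathbb P^r$ uniformly, checking that $r=5$ strictly dominates and that the singular (cone) models give no competing extremal examples, and verifying the numerical inequalities precisely for every $d>35$ --- which is where the hypothesis $d>35$ is consumed --- constitute the technical heart of the argument.
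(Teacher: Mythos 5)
Your skeleton --- trivial low codimension, the Sommese--Van de Ven dichotomy, reduction of the scroll case to the sectional genus bound $g\leq\frac{(d-2)(d-4)}{8}$, placement on a rational normal scroll, and identification of the extremal example as $\mathbb P^1\times D$ --- is the same as the paper's. But at the two points you yourself flag as the ``technical heart'' there are genuine gaps, and they are precisely where the hypothesis $d>35$ has to be earned. First, for $r=4$ the double point formula gives $2K^2_S=d(d-5)-10(g-1)+12\chi(\mathcal O_S)$, so everything hinges on a lower bound for $\chi(\mathcal O_S)$. With the trivial bound $\chi(\mathcal O_S)\geq 1-g$ and Halphen's bound $g\leq\frac{d^2}{8}+1$ for surfaces lying on a quartic threefold, the inequality $K^2_S>-d(d-6)$ only comes out for $d>68$, and such surfaces are known to be of general type (whence $\chi\geq1$) only for $d>97$; to cover $35<d\leq 68$ the paper must invoke a much finer lower bound $\chi(\mathcal O_S)\gtrsim\frac{d^3}{96}$ from \cite{D}. ``The double point formula controls the remaining invariants'' does not close this case.

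Second, and more seriously, in the $r=5$ scroll case the claim that a subextremal curve section ``must lie on a surface of minimal degree'' fails as stated: the Halphen bound for curves on a quartic surface in $\mathbb P^4$ is $\pi_1(d,4)=\frac{d^2}{8}-\frac{d}{2}+O(1)$, which \emph{exceeds} the genus $G(5;d)=\frac{d^2}{8}-\frac{3d}{4}+O(1)$ of the putative extremal surfaces, so genus considerations alone cannot exclude a threefold of degree $4$. The paper rules it out by a separate argument: for a scroll $\chi(\mathcal O_S)=1-g$, and a Hilbert-function lower bound for $\chi(\mathcal O_S)$ in the spirit of \cite{D} contradicts $g\geq\frac{1}{8}d^2-\frac{3}{4}d+1$ for $d>24$. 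Relatedly, your last step is too quick: writing $T=\mathbb P^1\times\mathbb P^2$ and $S\sim ah_1+bh_2$, the sectional genus is $\frac{(b-1)(b-2)}{2}+a(b-1)$, which for $d=a+2b$ fixed can reach $\sim d^2/6$ and equals $\frac{d^2}{8}-\frac{3d}{4}+1$ also for certain classes with $a>0$; so ``large genus forces $S=\mathbb P^1\times D$'' is not automatic. The paper instead minimizes $K^2_S=\phi(a)$ explicitly over \emph{all} admissible divisor classes on $T$ (its Appendix), which yields both the bound and the rigidity (i)$\Rightarrow$(iii) without having to classify the ruling. Finally, for scrolls with $r\geq6$ your Castelnuovo bound for the hyperplane section in $\mathbb P^{r-1}$ is $\sim\frac{d^2}{2(r-2)}$, which at $r=6$ is again $d^2/8$ and insufficient; the paper uses the sharper bound $g\leq G(r;d)$ in $\mathbb P^r$, valid because $g$ equals the irregularity of the scroll.
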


This is an inequality in the same vein of the classical
Pl\"ucker-Clebsch formula $$g\leq \frac{1}{2}(d-1)(d-2)$$ for the
genus $g$ of a projective curve of degree $d$. Unfortunately, the
argument we developed does not enable us to state a sharp lower
bound depending on the embedding dimension, like Castelnuovo's
bound, neither to examine the case $d\leq 35$.

\bigskip

\section{Proof of Theorem 1.1}

Put $r+1:=h^0(S,\mathcal L)$. Therefore  $|H^0(S,\mathcal L)|$
embeds $S$ in $\mathbb P^r$. Let $H\subseteq \mathbb P^{r-1}$ be
the general hyperplane section of $S$, so that $\mathcal L\cong
\mathcal O_S(H)$. We denote by $g$ the genus of $H$. If $r=2$ then
$d=1$ and $K^2_S=9>5$. If $r=3$ then $K^2_S=d(d-4)^2>-d(d-6)$ for
$d>5$. Therefore we may assume $r\geq 4$.

\bigskip
{\bf The case $r=4$}.

\smallskip
First we examine the case $r=4$. In this case we only have to
prove that, for $d>35$, one has $K^2_S>-d(d-6)$.

\smallskip
When $r=4$ we have the double point formula (\cite{Hartshorne}, p.
433-434, Example 4.1.3):
\begin{equation}\label{double}
d(d-5)-10(g-1)+12\chi(\mathcal O_S)-2K^2_S=0
\end{equation}
(use the adjunction formula $2g-2=H\cdot(H+K_S)$). Moreover by
Lefschetz Hyperplane Theorem we know that the restriction map
$H^1(S,\mathcal O_S)\to H^1(H,\mathcal O_H)$ is injective. So,
taking into account that
$$\chi(\mathcal O_S)=h^0(S,\mathcal O_S)-h^1(S,\mathcal
O_S)+h^2(S,\mathcal O_S),$$ we get
\begin{equation}\label{cai}
\chi(\mathcal O_S)\geq 1-g.
\end{equation}
By (\ref{double}) and (\ref{cai}) we deduce:
$$
2K^2_S\geq d(d-5)-22(g-1).
$$
Therefore to prove that $K^2_S>-d(d-6)$, it is enough to prove
that
\begin{equation}\label{reduce}
22(g-1)<3d^2-17d.
\end{equation}
First assume that $S$ is not contained in a hypersurface of degree
$s<5$. In this case, since $d> 14$, then by Roth's Lemma \cite{R},
\cite{MR}, we know that  $H$ is not contained in a surface of
degree $<5$ in $\mathbb P^3$. Recall that the arithmetic genus of
an irreducible, reduced, nondegenerate space curve of degree
$d>s^2-s$, not contained in a surface of degree $<s$, is bounded
from above by the Halphen's bound \cite{GP}:
$$
G(3;d,s):=\frac{d^2}{2s}+\frac{d}{2}(s-4)+1-\frac{(s-1-\epsilon)(\epsilon+1)(s-1)}{2s},
$$
where $\epsilon$ is defined by dividing $d-1=ms+\epsilon$, $0\leq
\epsilon\leq s-1$. Since $d>20$, we may apply this bound with
$s=5$, and we have:
$$
g\leq \frac{d^2}{10}+\frac{d}{2}+1.
$$
It follows (\ref{reduce}) as soon as $d>35$.

So we may assume that $S$ is contained in an irreducible and
reduced hypersurface of degree $s\leq 4$. First assume
$s\in\{2,3\}$. In this case one knows that for $d>12$ then $S$ is
of general type (\cite{BF}, p. 213), therefore $\chi(\mathcal
O_S)\geq 1$ (\cite{BV}, Th\'eor\`eme X.4, p. 154). Using  this and
(\ref{double}), we see that a sufficient condition for
$K^2_S>-d(d-6)$ is:
\begin{equation}\label{reduce2}
10(g-1)<3d^2-17d+12.
\end{equation}
If $s=2$ then by Halphen's bound we have $g\leq
\frac{d^2}{4}-d+1$. It follows (\ref{reduce2}) for $d>12$. If
$s=3$ then by Halphen's bound we have $g\leq
\frac{d^2}{6}-\frac{d}{2}+1$, from which it follows
(\ref{reduce2}) for $d>7$.

It remains to consider the case $S$ is contained in an irreducible
and reduced hypersurface of degree $s=4$. In this case we need to
refine previous analysis (in fact when $s=4$ one knows that $S$ is
of general type only for $d>97$ (\cite{BF}, p. 213);  moreover if
one simply  inserts Halphen's bound $g\leq \frac{d^2}{8}+1$ into
(\ref{reduce}), the inequality (\ref{reduce}) is satisfied only
for $d>68$). Now first recall that by (\cite{EP}, Lemme 1) one has
$$
\frac{d^2}{8}-\frac{9d}{8}+1\leq g\leq \frac{d^2}{8}+1.
$$
Hence there exists a rational number $0\leq x\leq 9$   such that
$$
g=\frac{d^2}{8}+d\left(\frac{x-9}{8}\right)+1.
$$
If $0\leq x\leq 6$ then $g\leq \frac{d^2}{8}-\frac{3d}{8}+1$, and
(\ref{reduce}) is satisfied for $d>35$. So we may assume $6< x\leq
9$. By (\cite{D}, Proposition 2, (2.2), (2.3) and proof) we have
$$
\chi(\mathcal O_S)\geq
\frac{d^3}{96}-\frac{d^2}{16}-\frac{5d}{3}-\frac{333}{16}-(d-3)d\left(\frac{9-x}{8}\right)
$$
$$
>\frac{d^3}{96}-\frac{d^2}{16}-\frac{5d}{3}-\frac{333}{16}-\frac{3d(d-3)}{8}=
\frac{d^3}{96}-\frac{7d^2}{16}-\frac{13d}{24}-\frac{333}{16}.
$$
From (\ref{double}) it follows that in order to prove that
$K^2_S>-d(d-6)$, it is enough that
$$
10(g-1)\leq
3d^2-17d+12\left(\frac{d^3}{96}-\frac{7d^2}{16}-\frac{13d}{24}-\frac{333}{16}\right),
$$
i.e. it is enough that
$$
10(g-1)\leq
\frac{d^3}{8}-\frac{9d^2}{4}-\frac{47d}{2}-\frac{999}{4}.
$$
Taking into account that $g\leq \frac{d^2}{8}+1$, one sees that
previous inequality holds true for $d>35$.

This concludes the proof of Theorem \ref{lbound} in the case
$r=4$.

\bigskip
{\bf The case $r\geq 6$}.

\medskip
Now we are going to examine the case $r\geq 6$. Also in this case,
we only have to prove that $K^2_S>-d(d-6)$. We distinguish two
cases, according that the line bundle $\mathcal O_S(K_S+H)$ is
spanned or not.

If $\mathcal O_S(K_S+H)$ is spanned then $(K_S+H)^2\geq 0$,
therefore, taking into account the adjunction formula $2g-2=
H\cdot(H+K_S)$, we get
$$
K^2_S\geq d-4(g-1).
$$
Let
$$
G(r-1;d)=\frac{d^2}{2(r-2)}-\frac{rd}{2(r-2)}+\frac{(r-1-\epsilon)(1+\epsilon)}{2(r-2)}
$$
be the Castelnuovo's bound for the genus of a nondegenerate
integral curve of degree $d$ in $\mathbb P^{r-1}$, that we may
apply to $g$ (here $\epsilon$ is defined by dividing
$d-1=m(r-2)+\epsilon$, $0\leq \epsilon\leq r-3$) (\cite{EH},
Theorem (3.7), p. 87). So we deduce
$$
K^2_S+d(d-6)\geq d-4(G(r-1;d)-1)+d(d-6)
$$
$$
=\frac{1}{r-2}\left[(r-4)d^2-(3r-10)d +2(r+\epsilon^2-\epsilon
r+2\epsilon-3)\right].
$$
Since $r\geq 3$ and $\epsilon\geq 0$, we may write:
$$
(r-4)d^2-(3r-10)d+2(r+\epsilon^2-\epsilon r+2\epsilon-3)
$$
$$
\geq (r-4)d^2-(3r-10)d-2\epsilon r
=d^2(r-4)-(5r-10)d+2rd-2\epsilon r.
$$
Observe that we have $d\geq r-1$ for $S$ is nondegenerate in
$\mathbb P^r$. It follows $2rd-2\epsilon r>0$ because
$\epsilon\leq r-3<d$. Hence, in order to prove that
$K^2_S>-d(d-6)$ it suffices to prove that $(r-4)d^2-(5r-10)d\geq
0$, i.e. that
$$
d\geq \frac{5r-10}{r-4}.
$$
Since $d\geq r-1$, this certainly holds for $r\geq 9$. On the
other hand, an elementary direct computation shows that
$$
(r-4)d^2-(3r-10)d+2(r+\epsilon^2-\epsilon r+2\epsilon-3)>0
$$
holds true also for $6\leq r\leq 8$, $0\leq \epsilon\leq r-3$ and
$d\geq r-1$, and for $r=5$ and $d>5$. Summing up, previous
argument shows that
\begin{equation}\label{red1}
{\text{if $\mathcal O_S(K_S+H)$ is spanned, $r\geq 5$ and $d>5$,
then $K^2_S>-d(d-6).$}}
\end{equation}

Now we assume that $\mathcal O_S(K_S+H)$ is not spanned. In this
case one knows that $S$ is a scroll (\cite{SVdV}, Theorem (0.1)),
i.e. $S$ is a $\mathbb P^1$-bundle over a smooth curve $C$, and
the restriction of $\mathcal O_S(1)$ to a fibre is $\mathcal
O_{\mathbb P^1}(1)$ (either $S$ is isomorphic to $\mathbb P^2$,
but in this case $K^2_S=9$). In particular one has that $g$ is
equal to the genus of $C$, and so we have (\cite{Hartshorne},
Corollary 2.11, p. 374)
$$
K^2_S=8(1-g).
$$
Let
$$
G(r;d)=\frac{d^2}{2(r-1)}-\frac{(r+1)d}{2(r-1)}+\frac{(r-\epsilon)(1+\epsilon)}{2(r-1)}
$$
be the Castelnuovo's bound for the genus of a nondegenerate
integral curve of degree $d$ in $\mathbb P^{r}$ (now $\epsilon$ is
defined by dividing $d-1=m(r-1)+\epsilon$, $0\leq \epsilon\leq
r-2$) (\cite{EH}, Theorem (3.7), p.87). Since $g$ is equal to the
genus of $C$, hence to the irregularity of $S$, by (\cite{GGS},
Lemma 4) we have:
$$
g\leq G(r;d).
$$
Hence we deduce:
$$
K^2_S=8(1-g)\geq 8(1-G(r;d)),
$$
and
$$
K^2_S+d(d-6)\geq 8(1-G(r;d))+d(d-6)=:\psi(r;d),
$$
with
$$
\psi(r,d)=\left(\frac{r-5}{r-1}\right)(d^2-2d)-
\frac{4}{r-1}(-r+2-\epsilon-\epsilon^2+\epsilon r).
$$
Taking into account that the function $d\to d^2-2d$ is increasing
for $d\geq 1$, and that $d\geq r-1$, we have:
$$
\psi(r,d)\geq
\psi(r,r-1)=\frac{1}{r-1}\left(r^3-9r^2+27r-23+4\epsilon+4\epsilon^2-4\epsilon
r\right).
$$
Now we notice:
$$
r^3-9r^2+27r-23+4\epsilon+4\epsilon^2-4\epsilon r\geq
r^3-9r^2+27r-23+4\epsilon^2-4\epsilon r
$$
$$=
r^3-10r^2+27r-23+(r-2\epsilon)^2\geq r^3-10r^2+27r-23,
$$
which is $>0$ for $r\geq 7$. An elementary direct computation
proves that $\psi(r,d)>0$ also for $r=6$ (and $d>4$). This
concludes the proof of Theorem \ref{lbound} in the case $r\geq 6$.

\bigskip
\begin{remark}\label{nota1}
We also remark that for $r=5$ we have
$\psi(5,d)=\epsilon^2-4\epsilon+3$. Since
$$
\epsilon^2-4\epsilon+3=\begin{cases} 3\quad {\text{if
$\epsilon=0$}}\\0 \quad {\text{if $\epsilon\in\{1,3\}$}}\\ -1\quad
{\text{if $\epsilon=2$}},
\end{cases}
$$
taking into account (\ref{red1}), it follows that $K^2_S>-d(d-6)$
holds true also for $r=5$ and $d>5$, unless $S\subset \mathbb P^5$
is a scroll, $K^2_S=8(1-g)$,  and
\begin{equation}\label{G}
g=G(5;d)=\frac{1}{8}d^2-\frac{3}{4}d+\frac{(5-\epsilon)(\epsilon+1)}{8},
\end{equation}
with $d-1=4m+\epsilon$, $0<\epsilon\leq 3$. We will use this fact
in the analysis of the case $r=5$ below.
\end{remark}

\bigskip
{\bf The last case: $r=5$}.

\medskip
In this section we examine the case $r=5$, $S\subset \mathbb P^5$.

By previous remark, we know that for $d>5$ one has
$K^2_S>-d(d-6)$, except when the surface $S$ satisfies the
condition $g=G(5;d)$. Now we are going to prove that these
exceptions are necessarily contained in a smooth rational normal
scroll of dimension $3$. As an intermediate step we prove that
such surfaces are contained in a threefold of degree $\leq 4$
(when $d>30$).

To this purpose, assume that $S$ is as before, and that it is not
contained in a threefold of degree $<5$. By (\cite{CC}, Theorem
(0.2)) we know that if $d>24$ then $H$ is not contained in a
surface of degree $<5$ in $\mathbb P^4$. Then by (\cite{EH},
Theorem (3.22), p. 117) we deduce that for $d>143$ one has
$$
g\leq
G(4;d,5):=\frac{1}{10}d^2-\frac{3}{10}d+\frac{1}{5}+\frac{1}{10}v-\frac{1}{10}v^2+w,
$$
where $v$ is defined by dividing $d-1=5n+v$, $0\leq v\leq 4$, and
$w:=\max\{0, [\frac{v}{2}]\}$ (with the notation of \cite{EH} we
have $\pi_{2}(d,4)=G(4;d,5)$). An elementary computation proves
that
\begin{equation}\label{abs}
G(4;d,5)-G(5;d)<0
\end{equation}
for $d>18$. This is absurd, therefore if $K^2_S\leq -d(d-6)$ and
$d>143$, then $S$ is contained in a threefold of degree $\leq 4$.
In order to prove this also for $30<d<144$ we have to refine
previous analysis. To this aim, first recall that
$$
G(4;d,5)=\sum_{i=1}^{+\infty}(d-h(i)),
$$
where
$$
h(i):=
\begin{cases} 5i-1 \quad {\text{if $1\leq i\leq n$}}\\
d-w \quad {\text{if $i=n+1$}}\\
d\quad {\text{if $i\geq n+2$}}
\end{cases}
$$
(\cite{EH}, p. 119). Let $\Gamma\subset \mathbb P^3$ be the
general hyperplane section of $H$, and let $h_{\Gamma}$ be its
Hilbert function.

Assume first that $h^0(\mathbb P^3,\mathcal I_{\Gamma}(2))\geq 2$.
Then, if $d>4$, by monodromy (\cite{CCD}, Proposition 2.1),
$\Gamma$ is contained in a reduced and irreducible space curve  of
degree $\leq 4$. By (\cite{CC}, Theorem (0.2)) we deduce that, for
$d>20$,  $S$ is contained in a threefold of degree $\leq 4$. Hence
we may assume $h^0(\mathbb P^3,\mathcal I_{\Gamma}(2))\leq 1$.

Assume now $h^0(\mathbb P^3,\mathcal I_{\Gamma}(2))=1$, and
$h^0(\mathbb P^3,\mathcal I_{\Gamma}(3))> 4$. As before, if $d>6$,
by monodromy (\cite{CCD}, Proposition 2.1), $\Gamma$ is contained
in a reduced and irreducible space curve  $X$ of degree
$\deg(X)\leq 6$. Again as before, if $\deg(X)\leq 4$, then $S$ is
contained in a threefold of degree $\leq 4$. So we may assume
$5\leq \deg(X)\leq 6$. By (\cite{CCD}, Proposition 4.1) we know
that, when $d>30$,
$$
h_{\Gamma}(i)\geq h(i) \quad {\text{for any $i\geq 0$.}}
$$
Hence we have (\cite{EH}, Corollary (3.2), p. 84):
$$
g\leq \sum_{i=1}^{+\infty}(d-h_{\Gamma}(i))\leq
\sum_{i=1}^{+\infty}(d-h(i))=G(4;d,5).
$$
Since $g=G(5;d)$, this is absurd for $d>18$ (compare with
(\ref{abs})). If $h^0(\mathbb P^3,\mathcal I_{\Gamma}(2))=1$ and
$h^0(\mathbb P^3,\mathcal I_{\Gamma}(3))=4$, then we have
$$
h_{\Gamma}(1)=4,\quad h_{\Gamma}(2)=9, \quad h_{\Gamma}(3)=16.
$$
Using induction and (\cite{EH}, Corollary (3.5), p. 86) we get for
any $i\geq 4$:
\begin{equation}\label{comp}
h_{\Gamma}(i)\geq\min\{d,\, h_{\Gamma}(i-3)+h_{\Gamma}(3)-1\}
\geq\min\{d,\, h(i-3)+15\}\geq h(i).
\end{equation}
As before, this leads to $g\leq G(4;d,5)$, which is absurd for
$d>18$.

Next assume $h^0(\mathbb P^3,\mathcal I_{\Gamma}(2))=0$, and that
$h^0(\mathbb P^3,\mathcal I_{\Gamma}(3))\leq 1$. Then we have:
$h_{\Gamma}(1)=4$, $h_{\Gamma}(2)=10$, $h_{\Gamma}(3)\geq 19$.
Then a similar computation as before leads to a contradiction if
$d>18$.

Finally assume $h^0(\mathbb P^3,\mathcal I_{\Gamma}(2))=0$, and
$h^0(\mathbb P^3,\mathcal I_{\Gamma}(3))\geq 2$. Then, by
monodromy (\cite{CCD}, Proposition 2.1), $\Gamma$ is contained in
a reduced and irreducible curve $X\subset \mathbb P^3$ of degree
$\deg(X)\leq 9$. By (\cite{CCD}, Proposition 4.1) we may also
assume $\deg(X)\geq 7$. Let $X'\subset \mathbb P^2$ be the general
hyperplane section of $X$. By Castelnuovo's Theory (\cite{EH},
Lemma (3.1), p. 83) we know that:
$$
h_X(i)\geq \sum_{j=0}^{i} h_{X'}(j).
$$
Therefore, taking into account (\cite{EH}, Corollary (3.6), p.
87), we have $h_X(1)\geq 4$, $h_X(2)\geq 9$, $h_X(3)\geq 16$. On
the other hand, since $d>27$, by Bezout's Theorem we have
$h_{\Gamma}(i)=h_X(i)$ for any $1\leq i\leq 3$. Hence we may
repeat the same argument as in (\ref{comp}), obtaining $g\leq
G(4;d,5)$, which is absurd.

Summing up, we proved that if $r=5$, $d>30$ and $K^2_S\leq
-d(d-6)$, then $S$ is a scroll, $K^2_S=8(1-g)$, $g=G(5;d)$,
$d\not\equiv 1\,({\text{mod}}\, 4)$, and $S$ is contained in a
threefold $T\subset \mathbb P^5$ of degree  $\leq 4$.
Unfortunately, assuming  $S$ in not contained in a threefold of
degree $<4$, previous argument does not work. Therefore we need a
different argument to prove that $S$ cannot lie in a threefold of
degree $4$.

To this aim, assume by contradiction that  $S$ is contained in a
threefold of degree $4$. Recall that we are assuming that $S$ is a
scroll, $K^2_S=8(1-g)$, $g=G(5;d)$, $d\not\equiv
1\,({\text{mod}}\, 4)$, and that $d>30$. In particular we have
(compare with (\ref{G})):
\begin{equation}\label{G1}
g\geq \frac{1}{8}d^2-\frac{3}{4}d+1.
\end{equation}
On the other hand, by (\cite{EH}, p. 98-99) we know that
$$
h_{\Gamma}(i)\geq k(i):=
\begin{cases} 4i \quad {\text{if $1\leq i\leq p$}}\\
 d-1\quad {\text{if $i= p+1$ and $q=3$}}\\
d\quad {\text{if $i= p+1$ and $q<3$ or $i\geq p+2$,}}
\end{cases}
$$
where $p$ is defined by dividing $d-1=4p+q$, $0\leq q\leq 3$. It
follows that
$$
g\leq \sum_{i=1}^{+\infty}(d-h_{\Gamma}(i))\leq
\sum_{i=1}^{+\infty}(d-k(i))=G(4;d,4),
$$
with
$$
G(4;d,4)=\frac{1}{8}d^2-\frac{1}{2}d+\frac{3}{8}+\frac{1}{4}q-\frac{1}{8}q^2+t,
$$
where $t=0$ if $0\leq q \leq 2$, and $t=1$ if $q=3$ (with the
notation as in (\cite{EH}, p. 99) we have $G(4;d,4)=\pi_1(d,4)$).
Moreover, since $S$ is a scroll, we also have
$$
\chi(\mathcal O_S)=1-g.
$$
And using the same argument as in the proof of (\cite{D},
Proposition 1, (1.2)), we get:
$$
\chi(\mathcal O_S)=1-g\geq 1+\sum_{i=1}^{d-4}(i-1)(d-k(i))-
(d-4)\left(\sum_{i=1}^{d-4}(d-k(i))-g\right)
$$
$$
=1+\sum_{i=1}^{d-4}(i-1)(d-k(i))- (d-4)\left(G(4;d,4)-g\right).
$$
Hence we have
$$
(d-3)g\leq -\sum_{i=1}^{d-4}(i-1)(d-k(i))+ (d-4)G(4;d,4).
$$
Using (\ref{G1}) we get:
$$
(d-3)\left(\frac{1}{8}d^2-\frac{3}{4}d+1\right)\leq
-\sum_{i=1}^{d-4}(i-1)(d-k(i))+ (d-4)G(4;d,4).
$$
Taking into account that
$$
\sum_{i=1}^{d-4}(i-1)(d-k(i))={\binom{p}{2}}d-8{\binom{p+1}{3}}+tp,
$$
previous inequality is equivalent to:
$$
-d^3+24d^2+(-9q^2+18q-125+72t)d-2q^3+42q^2-70q+174-360t+24tq \geq
0.
$$
This is impossible if $d>24$ (recall that $d-1=4p+q$, $0\leq q\leq
3$, and that $t=0$ for $0\leq q \leq 2$, and that $t=1$ for
$q=3$).

So we proved that  if $d>30$ and $K^2_S\leq -d(d-6)$, then $S$ is
a scroll, $g=G(5;d)$, and it is contained in a threefold $T\subset
\mathbb P^5$ of minimal degree $3$, i.e. in a rational normal
scroll $T\subset \mathbb P^5$ of dimension $3$ and degree $3$
(\cite{H}, p. 51).

First we prove that $T$ is necessarily nonsingular. Suppose not.
Let $L$ be a general hyperplane passing through a singular point
of $T$. Then $H\subset L$ is a curve contained in the surface
$T':=T\cap L$, which is a singular rational normal scroll. Put
$d-1=3p+q$, $0\leq q \leq 2$. Since the divisor class group of
$T'$ is generated by a line of the ruling, then $H$ is residual to
$2-q$ lines of the ruling of $T'$, in a complete intersection of
$T'$ with a hypersurface of degree $p+1$. Therefore $H$ is a.C.M.,
and so also $S$ is. In particular the arithmetic genus of $S$ is
equal to the geometric genus, therefore $\chi(\mathcal
O_S)=1-g\geq 1$, i.e. $g=0$, which is impossible in view of the
inequality $g\geq \frac{1}{8}d^2-\frac{3}{4}d+1$.

To conclude the proof of the Theorem it suffices to prove the
following:
\bigskip
\begin{proposition}\label{lboundscroll}
Let $S\subset \mathbb P^5$ be a nondegenerate, smooth,
irreducible, projective, complex surface  of degree $d\geq 18$,
contained in a smooth rational normal scroll $T$ of dimension $3$.
Then $K^2_S\geq -d(d-6)$. The bound is sharp, and the following
properties are equivalent.

\medskip
(i) $K^2_S= -d(d-6)$;

\medskip
(ii) $S$ is a scroll with sectional genus
$g=\frac{d^2}{8}-\frac{3d}{4}+1$;

\medskip
(iii)  $S$ is
linearly equivalent to $\frac{d}{2}(H_T-W)$, where $H_T$ is the
hyperplane class of $T$, and $W$ the ruling.
\end{proposition}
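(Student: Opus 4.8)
The plan is to prove everything by explicit intersection theory on the threefold $T$. Being a smooth rational normal scroll of dimension $3$ and degree $3$ in $\mathbb{P}^5$, $T$ is the balanced scroll $S(1,1,1)$, hence $T\cong \mathbb{P}^1\times\mathbb{P}^2$, with $\operatorname{Pic}(T)=\mathbb{Z}H_T\oplus\mathbb{Z}W$, where $W$ is the class of a fibre $\mathbb{P}^2$ of the projection $\pi\colon T\to\mathbb{P}^1$. I would first record the intersection numbers $H_T^3=3$, $H_T^2\cdot W=1$, $H_T\cdot W^2=W^3=0$, and the canonical class $K_T=-3H_T+W$. Writing the smooth divisor $S\sim aH_T+bW$, the degree is $d=S\cdot H_T^2=3a+b$, so $b=d-3a$. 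In the product description $W=\mathcal{O}(1,0)$ and $H_T=\mathcal{O}(1,1)$, hence $S\sim\mathcal{O}(d-2a,\,a)$; since $S$ is an irreducible nondegenerate surface meeting the general fibre in a plane curve of degree $a$, effectivity forces $a\ge 1$ and $d-2a\ge 0$. The only constraint I will really need is $a\le d/2$.

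Next I would compute $K_S^2$ by adjunction. From $K_S=(K_T+S)|_S$ and $K_T+S=(a-3)H_T+(b+1)W$, the triple intersection on $T$ gives
$$K_S^2=(K_T+S)^2\cdot S=(a-3)^2d+2a(a-3)(b+1).$$
Substituting $b=d-3a$ turns this into a cubic in $a$ with coefficients in $d$. The heart of the argument is then the identity
$$K_S^2+d(d-6)=(d-2a)\bigl(3a^2-10a+d+3\bigr),$$
which I would verify by direct expansion.

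The inequality now follows at once. The quadratic factor $3a^2-10a+d+3$ has discriminant $64-12d<0$ for $d\ge 18$, so it is strictly positive for every real $a$; the linear factor $d-2a$ is $\ge 0$ by effectivity of $S$. Hence $K_S^2\ge -d(d-6)$, and equality holds precisely when $d-2a=0$, i.e. $a=d/2$. In that case $d$ is even and $b=-d/2$, so $S\sim\frac{d}{2}(H_T-W)$; conversely this class gives back $a=d/2$. This proves the bound together with the equivalence (i)$\Leftrightarrow$(iii).

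Finally I would close the triangle with (ii). For (iii)$\Rightarrow$(ii), when $a=d/2$ one has $S\sim\frac{d}{2}(H_T-W)=\mathcal{O}(0,d/2)$, so $S=\mathbb{P}^1\times C$ with $C\subset\mathbb{P}^2$ a smooth plane curve of degree $d/2$; projection to $C$ exhibits $S$ as a scroll whose sectional genus equals $g(C)=\binom{d/2-1}{2}=\frac{d^2}{8}-\frac{3d}{4}+1$. For (ii)$\Rightarrow$(i), a scroll satisfies $K_S^2=8(1-g)$, and inserting $g=\frac{d^2}{8}-\frac{3d}{4}+1$ yields $K_S^2=-d(d-6)$. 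Together with (i)$\Leftrightarrow$(iii) this makes the three conditions equivalent, and the surfaces $\mathbb{P}^1\times C$ (for even $d$) realize the bound, proving sharpness. The only genuinely delicate point is the bookkeeping leading to the clean factorization displayed above; once it is in hand there is no geometric obstacle, the positivity of the quadratic factor and the effectivity constraint $a\le d/2$ doing all the work.
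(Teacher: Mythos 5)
Your proof is correct, and its quantitative core is genuinely different from the paper's. The setup is essentially the same: both arguments write $S$ in terms of the basis $H_T,W$ of $\operatorname{Pic}(T)$ and compute $K_S^2=(K_T+S)^2\cdot S$ by adjunction (after the substitution $b=d-3a$ your cubic is the paper's $\phi$ up to the reparametrization $a\mapsto m+1+a$, and your constraint $d-2a\ge 0$ is exactly the paper's $\alpha+\beta\ge 0$ from Lemma \ref{divscroll}). The difference is what happens next. The paper analyzes the cubic $\phi(a)$ by calculus in a separate Appendix --- sign of $\phi'$, sample values at $-m,-m+1,-m+2,0,1$, monotonicity on the admissible interval --- and this is where the hypothesis $d\ge 18$ enters. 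You instead exhibit the factorization
$$
K_S^2+d(d-6)=(d-2a)\bigl(3a^2-10a+d+3\bigr),
$$
which I have verified, and which is consistent with the paper's computations (at the odd-$d$ endpoint it reproduces $-\tfrac14 d^2+\tfrac12 d+\tfrac{35}{4}$, the paper's value of $\phi(a^*)$ for $q=1$). Since the quadratic factor has discriminant $64-12d<0$, the inequality and the characterization of equality by $a=d/2$ are immediate; only the upper effectivity constraint $d-2a\ge 0$ is used, the lower end of the admissible range plays no role, and $d\ge 18$ could be relaxed to $d\ge 6$. You also replace the paper's Lemma \ref{divscroll} (whose proof uses Kawamata--Viehweg vanishing) by the elementary K\"unneth effectivity criterion on $T\cong\mathbb{P}^1\times\mathbb{P}^2$, which suffices for the bound, and you obtain sharpness and (iii)$\Rightarrow$(ii) from the explicit extremal surfaces $\mathbb{P}^1\times C$ with $C$ a smooth plane curve of degree $d/2$, rather than from the paper's general existence statement. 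The cycle (i)$\Rightarrow$(iii)$\Rightarrow$(ii)$\Rightarrow$(i) is complete, so the three conditions are equivalent.
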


Before proving this, we need the following lemma:

\bigskip
\begin{lemma}\label{divscroll}  Let $T\subset \mathbb P^5$ be
a nonsingular rational normal scroll of dimension $3$. Let $H_T$
be a  hyperplane section of $T$, and $W$ a plane of the ruling.
Let $\alpha$ and $\beta$ be integer numbers. Then the linear
system $|\alpha H_T+\beta W|$ contains an irreducible,
nonsingular, and nondegenerate surface if and only if $\alpha
>0$, $\alpha+\beta \geq 0$, and $3\alpha+\beta\geq 4$.
\end{lemma}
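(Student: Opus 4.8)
The plan is to make the scroll completely explicit and reduce the statement to a bidegree computation on a product. Since $T$ is a smooth rational normal scroll of dimension $3$ and (minimal) degree $3$ in $\mathbb{P}^5$, it is the scroll $S_{1,1,1}=\mathbb{P}(\mathcal{O}_{\mathbb{P}^1}(1)^{\oplus 3})\cong \mathbb{P}^1\times\mathbb{P}^2$, embedded by the Segre map. I would fix $\operatorname{Pic}(T)=\mathbb{Z}f\oplus\mathbb{Z}h$, where $f$ is the class of a fiber (a plane of the ruling) and $h$ is the pullback of a line from $\mathbb{P}^2$. Then $W=f$ and $H_T=f+h$, so that $\alpha H_T+\beta W=af+bh$ with $a=\alpha+\beta$ and $b=\alpha$. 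I would record the two facts I will use repeatedly: $h^0(\mathcal{O}_T(af+bh))=h^0(\mathcal{O}_{\mathbb{P}^1}(a))\,h^0(\mathcal{O}_{\mathbb{P}^2}(b))$, and, using $f^2=0$, $fh^2=1$, $h^3=0$,
\[
\deg=(af+bh)\cdot H_T^2=(af+bh)\cdot(2fh+h^2)=a+2b=3\alpha+\beta .
\]
In these coordinates the three conditions $\alpha>0$, $\alpha+\beta\ge0$, $3\alpha+\beta\ge4$ read simply $b\ge1$, $a\ge0$, $a+2b\ge4$.

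For necessity, suppose $S\in|af+bh|$ is irreducible, nonsingular and nondegenerate. Effectivity forces $h^0(\mathcal{O}_T(af+bh))\ne0$, hence $a\ge0$ and $b\ge0$ by the product formula for $h^0$. If $b=0$ the class is $af$, whose members are unions of fibers; the only irreducible one is a single fiber $W\cong\mathbb{P}^2$, which is degenerate (it spans only a $\mathbb{P}^2\subset\mathbb{P}^5$), contradicting nondegeneracy. Thus $b\ge1$ and $a\ge0$. Finally, an irreducible nondegenerate surface in $\mathbb{P}^5$ has degree at least $\operatorname{codim}+1=4$, so $a+2b=\deg S\ge4$, i.e. $3\alpha+\beta\ge4$.

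For sufficiency, assume $b\ge1$, $a\ge0$, $a+2b\ge4$. The bundle $\mathcal{O}_T(af+bh)$ is globally generated (a product of pullbacks of globally generated bundles), so by Bertini the general member $S$ is smooth. For irreducibility I distinguish two cases. If $a\ge1$, then $af+bh$ is very ample, $S$ is a general hyperplane section of the irreducible threefold $T$ in its $\mathcal{O}(af+bh)$-embedding, hence irreducible by Bertini. If $a=0$, then $a+2b\ge4$ forces $b\ge2$, and $S=\mathbb{P}^1\times C$ with $C$ a general smooth plane curve of degree $b$, which is irreducible since $C$ is. Nondegeneracy (with respect to the original $H_T$-embedding) follows from the sequence
\[
0\to \mathcal{O}_T(H_T-S)\to \mathcal{O}_T(H_T)\to \mathcal{O}_S(H_T)\to 0:
\]
since $T$ is linearly normal, the kernel of the restriction $H^0(\mathbb{P}^5,\mathcal{O}(1))\cong H^0(\mathcal{O}_T(H_T))\to H^0(\mathcal{O}_S(H_T))$ is $H^0(\mathcal{O}_T((1-a)f+(1-b)h))$, which is nonzero only if $a\le1$ and $b\le1$; but that would give $a+2b\le3<4$, a contradiction, so no hyperplane contains $S$.

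The routine core is the product $h^0$ formula and Bertini; the delicate points are the boundary cases, and I expect them to be the main obstacle to a clean write-up. On the necessity side one must exclude $b=0$, where the members are genuinely smooth (the ruling planes) yet degenerate. On the sufficiency side the case $a=0$ must be treated separately, because there $af+bh$ is no longer very ample, the associated morphism contracts the $\mathbb{P}^1$-factor, and irreducibility has to be verified directly on $\mathbb{P}^1\times C$ rather than by a hyperplane-section argument. Getting the dictionary $H_T=f+h$, $W=f$ exactly right is what makes $\deg=3\alpha+\beta$ and turns nondegeneracy into the single inequality $a+2b\ge4$, so that the three stated conditions correspond precisely to effectivity, $b\ge1$, and the minimal-degree bound.
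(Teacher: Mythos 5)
Your proof is correct, and it takes a genuinely different route from the paper's. You exploit the fact that the unique smooth three-dimensional rational normal scroll in $\mathbb{P}^5$ is $S_{1,1,1}\cong\mathbb{P}^1\times\mathbb{P}^2$ Segre-embedded, so that everything (effectivity via the K\"unneth formula for $h^0$, global generation, degree, very ampleness for $a\geq 1$, and the explicit description $\mathbb{P}^1\times C$ when $a=0$) is read off from the product structure; nondegeneracy then follows from the vanishing of $H^0(\mathcal{O}_T(H_T-S))$ together with the linear normality of $T$. The paper instead restricts to a general hyperplane section $T'\cong\mathbb{F}_1$ (the cubic scroll surface), derives the necessity of the three inequalities from Hartshorne's classification of irreducible curves on $\mathbb{F}_1$ (Cor.\ V.2.18), quotes an earlier paper for base-point-freeness of $|\alpha H_T+\beta W|$, and lifts irreducibility from $T'$ to $T$ via a restriction sequence whose $H^1$ is killed by Kawamata--Viehweg vanishing. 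Your argument is more elementary and self-contained (no vanishing theorem, no external reference for global generation), and your nondegeneracy step is actually more detailed than the paper's one-line remark; the trade-off is that it leans on the coincidence $T\cong\mathbb{P}^1\times\mathbb{P}^2$, which is special to this scroll type, whereas the hyperplane-section technique of the paper works verbatim for smooth rational normal scrolls that are nontrivial $\mathbb{P}^2$-bundles in higher-dimensional ambient spaces. The only points worth polishing in a write-up are the justification that an external tensor product of very ample bundles is very ample on the product, and the version of Bertini giving irreducibility of the general hyperplane section of an irreducible nondegenerate threefold; both are standard.
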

\begin{proof}[Proof of Lemma \ref{divscroll}]
First assume that $|\alpha H_T+\beta W|$ contains an irreducible,
nonsingular, and nondegenerate surface $S$. Let $T':=T\cap\mathbb
P^4$ be a general hyperplane section of $T$, which is a rational
normal scroll surface in $\mathbb P^4$. Let $H_{T'}$ be a
hyperplane section of $T'$, and $W'$ a line of the ruling of $T'$.
Using the same notation of (\cite{Hartshorne}, Notation 2.8.1, p.
373, Example 2.19.1, p. 381), we have $C_0=H_{T'}-2W'$,
$C_0^2=-e=-1$. Therefore the general hyperplane section of $S$
belongs to the linear system $|\alpha H_{T'}+\beta W'|=|\alpha
C_0+(2\alpha+\beta) W'|$. Taking into account that $S$ is
nondegenerate, then by (\cite{Hartshorne}, Corollary 2.18, p. 380)
we get $\alpha >0$, $\alpha+\beta \geq 0$, and
$\deg(S)=3\alpha+\beta\geq 4$.

Conversely, assume $\alpha >0$ and $\alpha+\beta \geq 0$. Using
the same argument as in the proof of (\cite{D2}, Proposition 2.3),
we see that the linear system $|\alpha H_T+\beta W|$ is non empty,
and base point free. By Bertini's Theorem it follows that its
general member is nonsingular. As for the irreducibility, consider
the exact sequence:
$$
0\to \mathcal O_T((\alpha-1)H_T+\beta W)\to \mathcal O_T(\alpha
H_T+\beta W)\to
$$
$$
\to \mathcal O_T(\alpha H_T+\beta W)\otimes \mathcal O_{T'}=
\mathcal O_{T'}(\alpha H_{T'}+\beta W')\to 0.
$$
Since $K_T\sim -3H_T+W$ then we may write:
$$
(\alpha-1)H_T+\beta W=K_T+(\alpha+2)H_T+(\beta-1)W.
$$
As before, by \cite{D2}, we know that the line bundle $\mathcal
O_T((\alpha+2)H_T+(\beta-1)W)$ is spanned, hence nef. On the other
hand we have
$$
((\alpha+2)H_T+(\beta-1)W)^3=3(\alpha+2)^2(\alpha+\beta +1)>0.
$$
Therefore $\mathcal O_T((\alpha+2)H_T+(\beta-1)W)$ is big and nef.
Then by Kawamata-Viehweg Theorem we deduce
$$
H^1(\mathcal O_T((\alpha-1)H_T+\beta W))=0.
$$
This implies that the linear system $|\alpha H_T+\beta W|$ cut on
$T'$ the complete linear system $|O_{T'}(\alpha H_{T'}+\beta
W')|$, whose general member is irreducible by (\cite{Hartshorne},
Corollary 2.18, p. 380). A fortiori the general member of $|\alpha
H_T+\beta W|$ is irreducible.

Finally we notice that the general $S\in |\alpha H_T+\beta W|$ is
nondegenerate. In fact otherwise we would have $S=H_T$, which is
in contrast with our assumption  $\deg(\alpha H_T+\beta
W)=3\alpha+\beta\geq 4$.
\end{proof}

\begin{proof}[Proof of Proposition \ref{lboundscroll}] Define $m$ and $\epsilon$ by diving
\begin{equation}\label{di}
d-1=3m+\epsilon, \quad 0\leq \epsilon \leq 3.
\end{equation}
Since the Picard group of $T$ is freely generated by the
hyperplane class $H_T$ of $T$ and by the plane $W$ of the ruling,
then there exists an unique integer $a\in\mathbb Z$ such that
$$
S \sim (m+1+a)H_T+(\epsilon+1-3(a+1))W.
$$
By previous lemma, we may restrict our analysis to the range
$$
-m\leq a \leq \frac{1}{2}(m+\epsilon-1).
$$
Taking into account that
$$
K_T\sim -3H_T+W,
$$
from the adjunction formula we get (compare with \cite{D2},
$(0.4)$ and p. 149)
$$
K^2_S=\phi(a)=-6a^3+a^2(-9m+5+3\epsilon)+a(2m(3\epsilon-4)-6\epsilon+10)
$$
$$
+3m^3+m^2(3\epsilon-13)+m(10-6\epsilon)+8.
$$
In the given range this function takes its minimum exactly when
$$
a=a^*:=\frac{1}{2}(m+\epsilon-1)
$$
(see Appendix below). Since $\phi(a^*)=-d(d-6)$, it follows
$K^2_S>-d(d-6)$, except when $d$ is even and
$$
\frac{d}{2}=m+1+a^*=-(\epsilon+1-3(a^*+1)).
$$
In this case we already know that $S$ is a scroll with
$g=\frac{d^2}{8}-\frac{3d}{4}+1$.
\end{proof}

\bigskip
{{{\bf{Appendix.}}}}

\bigskip
With the notation as in the proof of Proposition
\ref{lboundscroll}, consider the function
$$
\phi(a):=-6a^3+a^2(-9m+5+3\epsilon)+a(2m(3\epsilon-4)-6\epsilon+10)
$$
$$
+3m^3+m^2(3\epsilon-13)+m(10-6\epsilon)+8.
$$
We are going to prove that {\it if $d\geq 18$ and $-m\leq a \leq
\frac{1}{2}(m+\epsilon-1)$, then $\phi(a)\geq -d(d-6)$, and
$\phi(a)=-d(d-6)$ if and only if $a=a^*$}. To this purpose we
derive with respect to $a$:
$$
\phi'(a)=-18a^2+2a(-9m+5+3\epsilon)+2m(3\epsilon-4)-6\epsilon+10.
$$
This is a degree $2$ polynomial in the variable $a$, whose
discriminant is:
$$
\Delta =324m^2-936m+216m\epsilon+110+114\epsilon+36\epsilon^2,
$$
which is $>0$ when $m\geq 3$, hence when $d\geq 12$ (compare with
(\ref{di})). Denote by $a_1$ and $a_2$ the real roots of the
equation $\phi'(a)=0$, with $a_1<a_2$, and let $I$ be the open
interval $I=(a_1,a_2)$. Then $\phi'(a)>0$ if and only if $a\in I$.
In particular $\phi(a)$ is strictly increasing for $a\in I$, and
strictly decreasing for $a\not\in I$. Now observe that
$$
\phi(-m)=8, \quad \phi(-m+1)=-9m+17-3\epsilon, \quad \phi(-m+2)=0.
$$
Notice that $-9m+17-3\epsilon\geq -9m+11$ because $0\leq
\epsilon\leq 2$, $-9m+11\geq -9\frac{d-1}{3}+11$ since
$\frac{d-1}{3}\leq m\leq \frac{d-3}{3}$, and $-3(d-1)+11>-d(d-6)$
if $d>7$. So for $a\in\{-m,-m+1,-m+2\}$ we have $K^2_S>-d(d-6)$.
Moreover we have $\phi'(-m+2)=18m+6\epsilon-42>0$ if $d\geq 12$,
and $\phi'(-1)=10m+6m\epsilon-12\epsilon-18>0$ if $d\geq 9$.
Therefore $[-m+2,-1]\subset I$ and so $\phi(a)\geq
\phi(-m+2)=0>-d(d-6)$ for $-m\leq a\leq -1$ and $d>12$. We also
have:
$$
\phi(0)=(m-2)(3m^2-7m+3m\epsilon-4),
$$
which is $\geq 0$ for $m\geq 3$, hence for $d\geq 12$. And
$$
\phi(1)=(m-1)(3m^2-10m+3m\epsilon+3\epsilon-17),
$$
which is $\geq 0$ for $m\geq 5$, hence for $d\geq 18$. Moreover we
have:
$$
\phi'(1)=2-26m+6m\epsilon <0.
$$
Therefore $\phi(a)$ is strictly decreasing for $a\geq 1$. It
follows that in the range $1\leq a \leq
a^*:=\left[\frac{m+\epsilon-1}{2}\right]$, the function $\phi$
takes its minimum exactly when $a=a^*$. Define $p$ and $q$ by
dividing:
$$
m+\epsilon-1=2p+q, \quad 0\leq q\leq 1,
$$
so that $p=a^*$. Notice that $d$ is even if and only if $q=0$. We
have:
$$
\phi(a^*)=\begin{cases}-d(d-6)\quad {\text{if $q=0$}}\\
-\frac{1}{4}d^2+\frac{1}{2}d+\frac{35}{4} \quad {\text{if $q=1.$}}
\end{cases}
$$
Since when $d>5$ we have
$$
-\frac{1}{4}d^2+\frac{1}{2}d+\frac{35}{4}>-d(d-6),
$$
by previous analysis it follows that, for any integer $-m\leq a
\leq \frac{m+\epsilon-1}{2}$, one has $\phi(a)\geq -d(d-6)$, and
$\phi(a)=-d(d-6)$ if and only if $d$ is even and
$a=\frac{m+\epsilon-1}{2}$.

\end{document}